\newtheorem{theorem}{Theorem}[section]
\newtheorem{lemma}[theorem]{Lemma}
\newtheorem{proposition}[theorem]{Proposition}
\theoremstyle{definition}
\theoremstyle{remark}
\numberwithin{equation}{section}
\DeclareMathOperator{\dimH}{dim_H}
\newcommand{\pr}{\mathbb{R}}
\newcommand{\pz}{\mathbb{Z}}
\newcommand{\de}{\delta}
\newcommand{\la}{\lambda}
\newcommand{\om}{\omega}
\newcommand{\si}{\sigma}
\newcommand{\Si}{\Sigma}
\newcommand{\wSi}{\widetilde{\Sigma}}
\newcommand{\wsi}{\widetilde{\sigma}}
\newcommand{\e}{\varepsilon}
\begin{document}

\title[2-dimensional measure with singular projection]
{Singularity of projections of 2-dimensional measures invariant under the 
  geodesic flow}

\author[R. Hovila]{Risto Hovila$^1$}
\address{Department of Mathematics and Statistics, P.O.Box 68,
         00014 University of Helsinki, Finland$^1$}
\email{risto.hovila@helsinki.fi}

\author[E. J\"arvenp\"a\"a]{Esa J\"arvenp\"a\"a$^2$}
\address{Department of Mathematical Sciences,  P.O. Box 3000,
         90014 University of Oulu, Finland$^{2,3}$}
\email{esa.jarvenpaa@oulu.fi$^2$}

\author[M. J\"arvenp\"a\"a]{Maarit J\"arvenp\"a\"a$^3$}
\email{maarit.jarvenpaa@oulu.fi$^3$}

\author[F. Ledrappier]{Fran\c cois Ledrappier$^4$}
\address{LPMA, UMR 7599, Universit\'e Paris 6, 4, Place Jussieu, 75252 Paris 
         cedex 05, France$^4$} 
\email{fledrapp@nd.edu$^4$}

\thanks{We acknowledge the Centre of Excellence in Analysis and Dynamics 
Research supported by the Academy of Finland. RH acknowledges the support from
Jenny and Antti Wihuri Foundation. FL acknowledges the partial
support from NSF Grant DMS-0801127.}

\subjclass[2000]{37C45, 53D25, 37D20, 28A80}
\keywords{Projection, Hausdorff dimension, invariant measure, geodesic flow}

\begin{abstract} 
We show that on any compact Riemann surface with variable negative curvature 
there exists a measure which is invariant and ergodic under the geodesic flow 
and whose projection to the base manifold is 2-dimensional and singular 
with respect to the 2-dimensional Lebesgue measure.  
\end{abstract}

\maketitle

\section{Introduction}\label{intro}

Let $S$ be a compact surface with possibly variable negative curvature and let 
$\varphi=\varphi_t$, $t\in\pr$,
be the geodesic flow on the unit tangent bundle $T^1S$. In this note we are 
interested in the projections of $\varphi$-invariant measures to $S$ by the
canonical projection $\Pi:T^1S\to S$, where $\Pi(x,v)=x$. In 
particular, we prove:

\begin{theorem}\label{main} 
For any compact surface $S$ whose curvature is everywhere negative, there 
exists an ergodic $\varphi$-invariant measure $m$ on $T^1S$ such that $\Pi_*m$
has Hausdorff dimension equal to 2 and is singular with respect to 
the Lebesgue measure on $S$.
\end{theorem}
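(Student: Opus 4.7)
The plan is to construct the ergodic invariant measure $m$ via symbolic dynamics and to analyze the pushforward $\Pi_{\ast}m$ using the local product structure of Gibbs states together with projection-type results. Model $\varphi$ as a suspension flow over a topologically mixing subshift of finite type (via a Markov section), so that ergodic $\varphi$-invariant measures on $T^{1}S$ correspond to ergodic shift-invariant measures on the base. Take $m$ to be an equilibrium state for a H\"older potential $\phi$ chosen from a one-parameter family that contains the geometric potential (yielding Liouville measure) at one endpoint.

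The key geometric observation is that the differential $d\Pi\colon T_{(x,v)}T^{1}S\to T_{x}S$ sends the flow direction to $v$ and \emph{both} the strong stable and strong unstable tangent vectors to the single line $\pr v^{\perp}\subset T_{x}S$, because the stable and unstable horocycles through $(x,v)$ are both tangent to $v^{\perp}$ at $x$; only the vertical direction (tangent to the fiber of $\Pi$) lies in the kernel. Consequently, in local Markov-cube coordinates $(t^{u},t^{s},t)$ along the unstable, stable and flow leaves, the decomposition $m\asymp m^{u}\otimes m^{s}\otimes dt$ pushes forward (to first order) as
\[
\Pi_{\ast}m \;\asymp\; dt \;\otimes\; (m^{u}\ast m^{s})
\]
in local $(v,v^{\perp})$-coordinates on $S$, where $m^{u}\ast m^{s}$ denotes the additive convolution along the $v^{\perp}$-axis. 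By the Ledrappier--Young formula, $\dim m^{u}=h(m)/\chi^{u}(m)$ and similarly for $m^{s}$. Choose $\phi$ so that $\dim m^{u}+\dim m^{s}\ge 1$; a Marstrand-type argument then yields $\dim(m^{u}\ast m^{s})=1$ and hence $\dimH \Pi_{\ast}m=2$.

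For singularity, it suffices to produce a Lebesgue-null Borel set $E\subset S$ with $\Pi_{\ast}m(E)>0$, which reduces to showing that $m^{u}\ast m^{s}$ is singular with respect to Lebesgue on the $v^{\perp}$-axis. This is the delicate point: Marstrand's theorem predicts that a generic convolution of measures with total dimension at least $1$ is absolutely continuous, so the specific convolution implemented by $d\Pi$ must be Marstrand-\emph{exceptional}. Variable curvature enters essentially here, giving enough freedom in the choice of $\phi$ to arrange $m^{u},m^{s}$ to be supported on Cantor-like sets whose Minkowski sum has dimension $1$ but Lebesgue measure zero. The main obstacle is the tension between keeping $\dim m^{u}+\dim m^{s}\ge 1$ (which pushes towards absolute continuity via Marstrand) and forcing the sumset to be Lebesgue-null (which demands non-generic structure). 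A continuous-deformation argument along the family of potentials, starting from Liouville (where $\Pi_{\ast}m$ is absolutely continuous and $2$-dimensional), should produce a potential for which full dimension $2$ persists while singularity has emerged.
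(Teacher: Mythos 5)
Your setup (symbolic coding via a Markov section, Gibbs/Markov measures tuned to threshold dimension) matches the paper's, and your geometric observation that $d\Pi$ collapses the stable and unstable directions onto the single line $\mathbb{R}v^{\perp}$ is correct and is essentially what underlies the Ledrappier--Lindenstrauss dimension-preservation theorem that the paper invokes. But the heart of the theorem is the singularity claim, and there your proposal has a genuine gap: you reduce it to showing that the self-convolution $m^{u}\ast m^{s}$ is a singular measure of exact dimension $1$, correctly identify this as Marstrand-exceptional behavior, and then offer only the hope that ``a continuous-deformation argument \ldots should produce'' such a potential. That is not an argument; nothing in the deformation picture forces singularity to ``emerge'' while dimension stays at the threshold, and indeed you have no mechanism to rule out that $m^{u}\ast m^{s}$ becomes absolutely continuous at the very moment $\dim m^{u}+\dim m^{s}$ crosses $1$. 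Moreover, your appeal to variable curvature as the source of the needed freedom cannot be right, since the theorem (and the paper's proof) covers constant curvature as well.

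The paper sidesteps the downstairs convolution analysis entirely. It proves the stronger statement that $m$ itself is singular with respect to $\mathcal{H}^{2}$ on the three-manifold $T^{1}S$, and then observes that, since $\Pi$ is Lipschitz, $\Pi_{\ast}m$ is carried by a Lebesgue-null set in $S$; full dimension of $\Pi_{\ast}m$ then follows from the cited dimension-preservation result. The actual mechanism for singularity is probabilistic, not geometric: at the threshold $\dimH m=2$, the $\mu_{P}$-measure of a dynamical ball of scale $\varepsilon$ is $\varepsilon^{2}$ times an exponential of a centered Birkhoff sum, and the vector-valued almost sure invariance principle of Melbourne--Nicol (plus a Liv\v{s}ic/Arnol'd argument to make the covariance nondegenerate) shows these Birkhoff fluctuations are of order $\sqrt{n}$ and recurrent, so $\limsup_{\varepsilon\to 0} m(B(\cdot,\varepsilon))/\varepsilon^{2}=\infty$ almost everywhere. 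That law-of-the-iterated-logarithm-type input is exactly what your sketch is missing; without it, or some comparable quantitative control on the fluctuations of the conditional measures, the dichotomy you set up between dimension $\ge 1$ and Lebesgue-nullity of $m^{u}\ast m^{s}$ has no resolution.
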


The image of a measure $\mu$ under a map $F$ is denoted by $F_*\mu$ and
the Hausdorff dimension of a measure $\mu$ is defined as follows:
\[
\dimH\mu=\inf\{\dimH A\mid\mu(A)>0, A\text{ is a Borel set}\}.
\]

It was shown by Ledrappier and Lindenstrauss \cite{LL} that the canonical 
projection of a $\varphi$-invariant measure of dimension greater than 
2 is absolutely continuous with respect to the Lebesgue measure. Our 
result shows that this property  does not hold at the threshold 2.  
Motivation for this study comes from Quantum Unique Ergodicity (QUE). Let 
$\psi_n$ be a sequence of orthonormal eigenfunctions of the Laplacian on $S$. 
The associated eigenvalues converge to infinity and the 
problem of  QUE is to describe the possible weak* limits of the probability 
measures with density $|\psi_n|^2$ as $n$ tends to infinity. This was solved by
Lindenstrauss for arithmetic hyperbolic surfaces in the case when $\psi_n$'s 
are also eigenfunctions of the Hecke operators -- the only limit is the 
normalized Lebesgue measure (see \cite{L}). In the case of a general 
hyperbolic surface, or of 
more general eigenfunctions, if any, on an arithmetic surface, Anantharaman and
Nonnenmacher \cite{AN} proved that any limit is the projection to $S$ of a 
$\varphi$-invariant measure with dimension at least 2. Rivi\`ere \cite{R} 
showed that this property is still true on surfaces with variable negative 
curvature. Our note shows that one still cannot conclude from their results 
that any weak* limit is nonsingular, which is a weak form of the QUE conjecture.

In order to prove Theorem \ref{main}, it suffices to construct an ergodic 
$\varphi$-invariant measure $m$ with dimension 2 and a measurable set 
$A\subset T^1S$ with the properties that $m(A^c)=0$ and
$\mathcal H^2(A)=0$, where $A^c=T^1S\setminus A$ and $\mathcal H^2$ is the 
2-dimensional Hausdorff measure. Since the Hausdorff measure cannot increase 
under the canonical projection, we have that $\Pi_\ast m$ is singular
with respect to the Lebesgue measure $\mathcal L^2$ on $S$. But, since the 
Hausdorff dimension of a $\varphi$-invariant measure is preserved under the
projection  by \cite{LL}, the Hausdorff dimension of $\Pi_\ast m$ is 2, as 
claimed.  For 3-dimensional Anosov flows, metric balls can be approximated by
dynamical ones on the manifold, so it is possible, by controlling entropy and 
exponent, to construct many invariant Gibbs measures with dimension exactly 2, 
see Section~\ref{Gibbs}. What we have to ensure is that the fluctuations of the 
measures of the dynamical balls are large enough so that for $m$-almost every 
point $(x,v)\in T^1S$, we have
\begin{equation}\label{bigmass}
\limsup_{\e\to 0}\frac{m(B((x,v),\e))}{\e^2}=+\infty .
\end{equation}
As shown in Section~\ref{iterlog} this property follows from a vector valued
almost sure invariance principle for hyperbolic dynamical systems proved in
\cite{MN2}.

\section{Invariant measures of dimension 2}\label{Gibbs}

We start by recalling some well-known facts of geodesic flows on negatively
curved surfaces (see for example \cite{KH}). Let $m$ be a $\varphi$-invariant 
ergodic measure on $T^1S$. The (largest Lyapunov) exponent $\la(m)$ is defined 
by
\[
\la(m)=\lim_{t\to\infty}\frac 1t\ln\|D\varphi_t(x,v)\|.
\]
The limit exists for $m$-almost all $(x,v)\in T^1S$ by the invariance and it is
$m$-almost surely independent of $(x,v)$ by the ergodicity. In the case of 
constant curvature $-1$ we have $\la=1$. The entropy 
$h_m(\varphi)$ is a number, $0\leq h_m(\varphi)\leq\la(m)$, which measures the 
randomness of typical trajectories (see e.g.~\cite{W} for the precise 
definition). The Hausdorff dimension of $m$ is given by
\[
\dimH m=1+2\frac{h_m(\varphi)}{\la(m)}
\]
(see \cite{PeS}). It follows that $m$ has dimension 2, if and only if we 
have $h_m(\varphi)=\la(m)/2$ ($=1/2$ in the case of constant curvature $-1$).
On any family for which the ratio $h_m(\varphi)/\la(m)$ varies continuously 
from $0$ to $1$, there will be measures with dimension $2$. In order to have 
specific examples, we shall consider a special family: Markov measures in a 
symbolic coding of the geodesic flow.

Let $A=A_{ij}$, $1\leq i,j\leq n$, be an $n\times n$-matrix with entries $0$ or
$1$ and define a subshift of finite type $\Si\subset\{1,\dots,n\}^\pz$ as 
the set of sequences $\underline\om=(\om_k)_{k\in\pz}$ such that 
$A_{\om_k\om_{k+1}}=1$ for all $k\in\pz$. The metric on $\Si$ is given by some 
number $\theta$ with $0<\theta<1$: $d(\underline\om,\underline\om)=0$ and for 
$\underline\om\ne\underline\om'$, 
$d(\underline\om,\underline\om')=\theta^{n(\underline\om,\underline\om')}$, where 
$n(\underline\om,\underline\om')$ is the largest number with $\om_j=\om_j'$ for
all $j$ with $|j|<n(\underline\om,\underline\om')$, using the interpretation
$n(\underline\om,\underline\om')=0$ if $\omega_0\ne\omega'_0$. The left shift 
on $\Si$ is denoted by $\sigma$, that is, 
$\sigma(\underline\omega)_i=\omega_{i+1}$ for all $i\in\mathbb Z$. If
there is a positive $p\in\mathbb N$ such that all the 
entries of the matrix $A^p$ are positive, the shift $(\Si,\si)$ is 
topologically mixing. For a positive continuous function $r$ on $\Sigma$, we 
define the special flow $(\wSi_r,\wsi_t)$, $t\in\pr$, by translation on the 
second coordinate, where
\[
\wSi_r:=\{(\underline\om,s)\mid\underline\omega\in\Sigma, 
   0\leq s\leq r(\underline\om)\}/(\underline\om,r(\underline\om))\sim 
   (\si(\underline\om),0),
\]
that is, for $t\ge 0$ we have 
$\wsi_t(\underline\om,s)=(\sigma^k(\underline\om),u)$ where 
$u=t+s-\sum_{j=0}^{k-1}r(\sigma^j(\underline\omega))$ and $k$ is the unique 
natural number satisfying $0\le u<r(\sigma^k(\underline\omega))$, and similarly
for $t<0$. 
The following result is due to Ratner \cite{Ra} (see \cite{S1} for a more 
geometric description of the space $\Si$).

\begin{proposition}\label{coding} 
There exist a mixing subshift of finite type $(\Si,\si)$, a H\"older 
continuous function $r$ on $\Si$ and a H\"older continuous mapping 
$\pi:\widetilde\Si_r\to T^1S$ such that 
$\pi\circ\widetilde\si_t=\varphi_t\circ\pi$. The mapping $\pi$ is finite-to-one
and one-to-one outside a closed invariant set of smaller topological entropy.
\end{proposition}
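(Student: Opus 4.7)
The plan is to follow the Bowen--Ratner approach to symbolic coding of Anosov flows, which is what is asserted in \cite{Ra}. Because $S$ has everywhere negative curvature, the geodesic flow $\varphi_t$ is a $3$-dimensional Anosov flow: the tangent bundle $T(T^1S)$ splits continuously into the flow direction together with one-dimensional strong stable and strong unstable line bundles $E^s,E^u$ integrating to H\"older foliations $W^s, W^u$ with uniform exponential contraction and expansion.

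First I would choose a finite family of small disks $D_1,\dots, D_n$ transverse to the flow, each given as a local product of a short unstable arc and a short stable arc through a base point, and small enough that every $\varphi$-orbit meets $\bigcup_i D_i$ in uniformly bounded time; this makes the Poincar\'e first-return map $R$ well-defined on a full-measure subset. The central step is then to refine the $D_i$ into a \emph{Markov section}, i.e.\ a collection of rectangles such that $R$ maps the stable boundary of each rectangle into the stable boundary of its image and $R^{-1}$ does the analogous thing for unstable boundaries. This refinement is obtained by successively cutting along forward and backward $R$-iterates of the boundaries, and is the conceptual and technical heart of the proposition. Rather than reconstructing the construction I would simply invoke \cite{Ra}; this is the main obstacle and everything else is comparatively routine.

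Given the Markov section, the subshift $\Sigma \subset \{1,\dots,n\}^{\pz}$ is defined by $A_{ij}=1$ iff $R(\text{int}\,D_i)\cap\text{int}\,D_j \neq \emptyset$; to a sequence $\underline\om$ one associates the unique point in the section whose $R$-orbit visits the rectangles $D_{\om_k}$ for all $k\in\pz$, uniqueness being forced by the local product structure together with the exponential contraction of $R^k$ on stable arcs and of $R^{-k}$ on unstable arcs. H\"older continuity of this coding follows from the uniform exponential rates, for a suitable value of the metric parameter $\theta$. The return time pulled back to $\Si$ gives the H\"older roof $r$, and extending the base coding by translation along the flow yields $\pi:\wSi_r\to T^1S$ satisfying $\pi\circ\wsi_t=\varphi_t\circ\pi$ by construction.

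Finally, mixing of $(\Si,\si)$ reduces to topological mixing of the geodesic flow on a negatively curved surface (see \cite{KH}); after a harmless merging or splitting of symbols one can arrange $A^p$ to have only positive entries for some $p\in\mathbb{N}$. Points with more than one $\pi$-preimage are precisely those whose $\varphi$-orbit meets the lower-dimensional boundaries of the Markov rectangles; these boundaries are contained in finitely many stable and unstable leaves, so the set $E$ of such points is a closed $\varphi$-invariant subset whose topological entropy is strictly less than that of $\varphi$ by the standard boundary estimate for Markov partitions. Outside $E$, the coding $\pi$ is bijective, while on $E$ a bounded-to-one multiplicity is produced by the finitely many rectangles sharing each boundary piece.
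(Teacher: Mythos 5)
The paper gives no proof of this proposition at all: it simply cites Ratner \cite{Ra} (with \cite{S1} as an alternative reference), and your proposal likewise outlines the standard Bowen--Ratner Markov-section construction and then invokes \cite{Ra} for its technical core. Your approach is therefore essentially identical to the paper's, and the extra detail you supply about Markov sections, mixing, and boundary-orbit failure is correct.
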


We call an $n\times n$-matrix $P=P_{ij}$, $1\leq i,j\leq n$, a Markov 
matrix on $\Si$ if it is Markov (meaning that $P_{ij}\geq 0$ for all 
$i,j=1,\dots,n$ and $\sum_{j=1}^n P_{ij}=1$ for all $i=1,\dots,n$) and 
$P_{ij}>0$, if and only if $A_{ij}=1$. To a Markov matrix $P$ on $\Si $ is 
associated a unique 
$\si$-invariant probability measure $\mu_P$ on $\Si$ given by the formula
\begin{equation}\label{muP}
\mu_P([\underline\om]_{n_2}^{n_1})=v_{\omega_{n_2}}\prod_{i=n_2}^{n_1-1}
  P_{\omega_i\omega_{i+1}}=e^{\ln v_{\omega_{n_2}}+\sum_{i=n_2}^{n_1-1}
  G(\sigma^i(\underline\omega))},
\end{equation}
where $[\underline\om]_{n_2}^{n_1}$ is the cylinder of order $(n_2,n_1)$ 
containing $\underline\om$, i.e. 
\[
[\underline\om]_{n_2}^{n_1}:=\{\underline\om'\in\Sigma\mid\om'_i=\om_i 
  {\textrm{ for }}n_2\leq i\leq n_1\},
\]
$v$ is the unique left eigenvector with $\sum_{i=1}^nv_i=1$ corresponding to 
the eigenvalue $1$ of $P$ 
and $G(\underline\omega)=\ln P_{\om_0\om_1}$. The system $(\Si,\si,\mu_P)$ is 
mixing for all such $P$. Define a $\widetilde\si$-invariant  
probability measure $\widetilde\mu_P$ on $\widetilde\Si_r$ by 
\[
\widetilde\mu_P=\frac{\int_\Sigma\mathcal L|_{[0,r)}\,d\mu_P}
  {\int_\Si r\, d\mu_P}
\]
and set $m_P=\pi_\ast\widetilde\mu_P$. The measure $m_P $ is ergodic for 
$\varphi $.

The entropy $h_{m_P}(\varphi)$ is given by Abramov formula (see \cite{Ab}): 
\[
h_{m_P}(\varphi)=h_{\widetilde\mu_P}(\widetilde\si)=\frac{h_{\mu_P}(\si)}
  {\int_\Si r\, d\mu_P}.
\]
The exponent $\la(m_P)$ is given by 
\[
\la(m_P)=\frac{\int_\Si F^u\,d\mu_P}{\int_\Si r\,d\mu_P},
\]
where 
$F^u(\underline\om)=\ln\|D\varphi_{r(\underline\omega)}(\pi(\underline\om,0))
  (v^u)\|$
and $v^u\in T_{\pi(\underline\om,0)}^1S$ is tangent to the unstable manifold at
$\pi(\underline\om,0)$. Note that $F^u(\underline\om)$  
is the expansion in the unstable direction from 
$\pi(\underline\om,0)$ to $\pi(\underline\om,r(\underline\om))$ 
($F^u=r$ in the constant curvature $-1$ case). The function $F^u$ is H\"older 
continuous and positive on $\Si$. In the same way one defines
$F^s$ for the inverse flow $\varphi_{-t}$. 
Our aim is to verify that there are many Markov matrices $P$ on $\Si$ such that 
$h_{\mu _P}(\si)/\int_\Si F^u\,d\mu_P=1/2$ giving $\dimH m_P=2$. The real 
analyticity of the mapping $P\mapsto h_{\mu_P}(\si)/\int_\Si F^u\,d\mu_P$ follows
from \cite[Corollary 7.10]{Ru},
but we do not know a priori that $1/2$ is a possible value for 
$h_{\mu_P}(\si)/\int_\Si F^u\,d\mu_P$. Nevertheless, we claim:

\begin{proposition}\label{dim} 
There exist a mixing subshift of finite type $(\Si,\si)$, a H\"older 
continuous function $r$ on $\Si$, a H\"older continuous mapping 
$\pi:\widetilde\Si_r\to T^1S$ such that 
$\pi\circ\widetilde\si_t=\varphi_t\circ\pi$ and the set of
Markov matrices $P$ on $\Si$ for which $\dimH m_P=2$
contains a smooth submanifold having codimension equal to
one. The mapping $\pi $ is finite-to-one and one-to-one 
outside a closed invariant set of smaller topological entropy. 
\end{proposition}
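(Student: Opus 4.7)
The strategy is to apply the implicit function theorem to the real-analytic functional $F : P \mapsto h_{\mu_P}(\si)/\int_\Si F^u\,d\mu_P$ on the open manifold $\mathcal{M}$ of Markov matrices $P$ compatible with the $0$-$1$ matrix $A$. By \cite[Corollary 7.10]{Ru} the map $F$ is real analytic, and since $\dimH m_P = 1 + 2F(P)$, the set we want to describe is exactly the level set $\{F = 1/2\}$. The concrete plan is therefore to exhibit $P^\ast \in \mathcal{M}$ at which $F(P^\ast) = 1/2$ and $dF(P^\ast)\neq 0$; the implicit function theorem then produces a smooth codimension-one submanifold of $\{F=1/2\}$ through $P^\ast$.

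To locate such a $P^\ast$ I would apply the intermediate value theorem along a path in $\mathcal{M}$. Starting from the Ratner coding $(\Si,\si,r,\pi)$ of Proposition~\ref{coding}, I replace $\Si$ by its $N$-block recoding for a sufficiently large $N$ if necessary; this preserves the mixing property, the H\"older continuity of the roof $r$, and the semiconjugacy $\pi\circ\wsi_t = \varphi_t\circ\pi$, while turning any $N$-step Markov measure on the original coding into a $1$-step Markov measure on the refined one. A matrix $P^-\in\mathcal{M}$ with $F(P^-)<1/2$ is easy to produce: concentrate the mass of $\mu_{P^-}$ near a fixed periodic orbit of $\si$ by placing weight $1-O(\e)$ on a single allowed successor from each state and weight $O(\e)$ on the remaining ones; then $h_{\mu_{P^-}}(\si)=O(\e\log(1/\e))$ while $\int F^u\,d\mu_{P^-}$ remains bounded away from $0$, so $F(P^-)\to 0$ as $\e\to 0$. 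A matrix $P^+$ with $F(P^+)>1/2$ is more delicate: the Liouville measure on $T^1S$ has Hausdorff dimension $3$ and pulls back through $\pi$ to the $\si$-invariant equilibrium state $\nu$ of $-F^u$, which satisfies Pesin's formula $h_\nu(\si)=\int F^u\,d\nu$, i.e.\ $F(\nu)=1$. After a sufficiently fine $N$-block recoding, $\nu$ is approximated arbitrarily well in the weak-$\ast$ topology, and simultaneously in entropy and in $\int F^u$, by a $1$-step Markov measure $\mu_{P^+}$, so $F(P^+)>1/2$ can be arranged.

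Since $\mathcal{M}$ is path-connected (it is a product of open simplices indexed by rows), I join $P^-$ to $P^+$ by a real-analytic path $P_t$, for instance the straight segment in the affine space of row-stochastic matrices. Then $g(t):=F(P_t)$ is real analytic on $[0,1]$ with $g(0)<1/2<g(1)$; since $g$ is non-constant analytic, the zeros of $g-1/2$ are isolated and at least one of them, call it $t^\ast$, satisfies $g'(t^\ast)\neq 0$. This forces $dF(P_{t^\ast})\neq 0$, and the implicit function theorem yields the required smooth codimension-one submanifold through $P^\ast := P_{t^\ast}$. The main obstacle is the construction of $P^+$: realising a measure of top Hausdorff dimension on $T^1S$ as the image of a $1$-step Markov measure forces the refinement of the coding together with a simultaneous control of weak-$\ast$ convergence, entropy and exponent, all of which is supplied by the thermodynamic-formalism machinery of \cite{Ru}.
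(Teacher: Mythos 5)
Your overall route — recode to $\ell$-blocks so that the Liouville measure (of dimension $3$) is approximable by one-step Markov measures, find $P^{-}$ with $F(P^{-})<1/2$ and $P^{+}$ with $F(P^{+})>1/2$, and use the connectedness of the Markov simplex together with the real analyticity from \cite[Corollary 7.10]{Ru} — is exactly the paper's. The gap is in the final step. You assert that because $g(t)=F(P_t)$ is non-constant analytic and $g(0)<1/2<g(1)$, ``at least one of the zeros of $g-1/2$ satisfies $g'(t^{\ast})\neq 0$.'' That is false: take $g(t)=1/2+(t-1/2)^3$ on $[0,1]$; it is analytic, $g(0)<1/2<g(1)$, the only zero of $g-1/2$ is $t^{\ast}=1/2$, and $g'(t^{\ast})=0$. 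A sign-changing zero of a real-analytic function forces an \emph{odd-order} vanishing of $g-1/2$, not a first-order one, so you cannot conclude $dF(P^{\ast})\neq 0$, and the implicit function theorem does not apply directly.

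The paper faces exactly this issue and resolves it: along the segment $L$ joining a vertex $v$ of the Markov simplex (where $d=1$) to a maximum point $a$ of the dimension function (where $d>2$), if every point of the level set $L_2=\{d=2\}\cap L$ has vanishing directional derivative, they pick the first $x_0\in L_2$ that is not a local max (it cannot be a local min by minimality), so $d(x_0+h)-d(x_0)=h^n+\mathcal O(h^{n+1})$ with $n$ odd. They then perform a local coordinate change in the $L$-direction — a smooth change of variables off the hyperplane $V$ through $x_0$ perpendicular to $L$ — after which $x_0$ becomes a regular point. Either the level set $\{d=2\}$ contains points off $V$ near $x_0$ (and those are genuine regular points of $d$ after the change of coordinates), or the level set is locally contained in $V$, which is already a smooth hypersurface. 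To repair your argument you need to insert some version of this case analysis (or give an independent reason why the gradient $dF(P^{\ast})$ is nonzero even when the directional derivative along your chosen segment vanishes, which you have not done).
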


\begin{proof} To find infinitely many suitable Markov matrices,
it is sufficient to find a coding system
$\Si$ such that $1/2$ is an interior point of the image set of the map 
$P\mapsto h_{\mu_P}(\si)/\int_\Si F^u\, d\mu_P$. 
Since the set of Markov matrices is connected, the image set of 
$h_{\mu_P}(\si)/\int_\Si F^u\, d\mu_P$ is an interval. Clearly, 
$\int_\Si F^u\, d\mu_P\geq\inf F^u$, whereas $h_{\mu _P}(\si)$ can be 
arbitrarily small. Therefore, the interval is $(0,a_1]$, where 
$a_1:=\max_P h_{\mu_P}(\si)/\int_\Si F^u\, d\mu_P$. If $a_1\le 1/2$, we can 
relabel $\Si$, using as alphabet those words $i_1\cdots i_\ell$ of 
length $\ell$ for which $A_{i_ki_{k+1}}=1$ for all $k=1,\dots,\ell-1$ and as a
new matrix 
\[
A_{i_1\cdots i_\ell,j_1\cdots j_\ell}=1 \; \iff \; j_1=i_2,\dots,
   j_{\ell-1}=i_\ell.
\]
Markov matrices in this new presentation define a bigger family of invariant 
measures, the $\ell$-step Markov measures in the initial presentation. We set 
$a_\ell:=\max_{P_\ell}h_{\mu_{P_\ell}}(\si)/\int_\Si F^u\, d\mu_{P_\ell}$, where 
$P_\ell$ varies among all Markov measures in the alphabet of length $\ell$.
We have to show that for $\ell$ large enough we have $a_\ell>1/2$. 
Indeed, $a_\ell\to 1$ as $\ell\to\infty$. This is true because the Liouville 
measure $m_0$ on $T^1S$, for which $h_{m_0}(\varphi)=\la(m_0)$ and 
$\dimH m_0=3$, is a Gibbs measure whose entropy can be approximated by the 
entropies of $\ell$-step  Markov measures with $\ell$ going to infinity.

Finally, we prove that the set of Markov matrices giving dimension 2 contains
a smooth submanifold having codimension equal to 1. Consider a line $L=[v,a]$ 
starting from a vertex $v$ of the simplex of all Markov matrices and ending at 
a maximum point $a$ of the dimension function $d$. Recall that $d(a)>2$. 
Since $d(v)=1$ (entropy equals 0) there are points on $L$ where $d$ equals 2.
Denote this level set on $L$ by $L_2$. 
If $d'(x)\ne 0$ for some $x\in L_2$ we have found a regular point of the 
dimension function and we are done. Suppose $d'(x)=0$ for all $x\in L_2$.
All points $x\in L_2$ cannot be local maximum points since $d(a)>2$. Let 
$x_0$ be the
smallest point in $L_2$ which is not a local maximum point. Then $x_0$ 
is not a local minimum point since $d(y)\le d(x_0)$ for all $y\le x_0$.
Thus for small $h>0$ we have $d(x_0+h)-d(x_0)=h^n+\mathcal O(h^{n+1})$ for some
odd $n$. Therefore, after a 
local coordinate transformation $x_0$ becomes a regular point. This coordinate
transformation is smooth outside the hyperplane $V$ perpendicular to $L$ at
$x_0$ and is an identity on $V$. If the level set $d=2$ contains a point 
outside $V$, we are done. If the level set is contained in $V$, it is a piece
of a hyperplane.    
\end{proof}

Observe in particular that, since $m_P$ is ergodic and the support of $m_P$ is
the whole space $T^1S$, $\pi$ is invertible $m_P$-almost everywhere.

\section{Fluctuations of  measures of balls}\label{iterlog}

Suppose we have chosen $P$ by Proposition~\ref{dim} in such a way that 
$\dimH m_P=2$. Then 
\[
\lim _{\e\to 0}\frac{\ln m_P(B((x,v),\e))}{\ln\e}=2
\]
for $m_P$-almost all $(x,v)\in T^1S$. In this section we show that we can 
choose $P$ in such a way that equation (\ref{bigmass}) also holds for 
$m_P$-almost all $(x,v)\in T^1S$. Thus the upper derivative 
$\overline D(m_P,\mathcal H^2,(x,v))=\infty$ for $m_P$-almost all 
$(x,v)\in T^1S$ implying that $m_P$ and $\mathcal H^2$ are mutually singular
(see \cite[Theorem 2.12]{Ma}). By the discussion in the introduction, this 
completes the proof of Theorem~\ref{main}. We have to estimate 
$m_P(B((x,v),\e))$ from below. 

\begin{lemma}\label{balls}  
There exists a constant $C$ such that, for $\e$ small enough, if the functions
$n_1=n_1(\underline\om,\e)$ and $n_2=n_2(\underline\om,\e)$ satisfy 
\begin{equation}\label{smallradius}
\sum_{k=0}^{n_1}F^u(\si^k(\underline\om))\ge -\ln\e+C\text{ and }
\sum_{k=0}^{n_2}F^s(\si^{-k}(\underline\om))\ge -\ln\e +C,
\end{equation}
then
\[
B(\pi(\underline\om,t),\e)\supset\pi([\underline\om]_{-n_2}^{n_1}\times
  [t-\e/4,t+\e/4])
\]
for all $(\underline\om,t)\in\widetilde\Si_r$.
\end{lemma}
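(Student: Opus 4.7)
The plan is to bound the Riemannian distance on $T^1S$ from $\pi(\underline\om,t)$ to any point $\pi(\underline\om',u)$ with $\underline\om'\in[\underline\om]_{-n_2}^{n_1}$ and $|u-t|\le\e/4$ by separately controlling the contributions of the three invariant directions (stable, unstable, flow) of the Anosov flow. Using the conjugacy $\pi\circ\widetilde\si_s=\varphi_s\circ\pi$ together with the fact that the geodesic flow has unit speed and that $\varphi_s$ is uniformly Lipschitz (say with constant $K$) on $T^1S$ for $s$ in the compact range $[0,\max r]$, the triangle inequality yields
\begin{equation*}
d\bigl(\pi(\underline\om',u),\pi(\underline\om,t)\bigr)\le |u-t|+K\, d\bigl(\pi(\underline\om',0),\pi(\underline\om,0)\bigr)\le \frac{\e}{4}+K\, d\bigl(\pi(\underline\om',0),\pi(\underline\om,0)\bigr).
\end{equation*}
So it suffices to bound $d(\pi(\underline\om',0),\pi(\underline\om,0))$ by a suitable small multiple of $\e$. (For $\e$ small, one works inside a single fundamental strip of the suspension so that $\pi(\underline\om',u)=\varphi_{u-t}(\pi(\underline\om',t))$; a standard case distinction at the boundary of the strip is harmless.)

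Next I would appeal to the Bowen--Ratner Markov section structure underlying the coding of Proposition~\ref{coding}. On such a section through $\pi(\underline\om,0)$, standard bounded-distortion estimates for hyperbolic symbolic flows (compare \cite{S1}) produce a uniform constant $C_0$, absorbing the H\"older distortion of the Birkhoff sums of $F^u,F^s$ and the distortion of the return map, with the following property: the forward cylinder $[\underline\om]_0^{n_1}$ is mapped by $\pi(\cdot,0)$ into a subset lying within unstable distance $C_0\exp\!\bigl(-\sum_{k=0}^{n_1}F^u(\si^k\underline\om)\bigr)$ of $\pi(\underline\om,0)$ along the local unstable manifold, while symmetrically the backward cylinder $[\underline\om]_{-n_2}^{0}$ is mapped into a subset lying within stable distance $C_0\exp\!\bigl(-\sum_{k=0}^{n_2}F^s(\si^{-k}\underline\om)\bigr)$. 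By the local product structure on the Markov rectangle, the image of the full cylinder $[\underline\om]_{-n_2}^{n_1}$ is then contained in a stable$\times$unstable box of side lengths at most $C_0 e^{-C}\e$ in each direction, by the hypothesis (\ref{smallradius}).

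Finally, since the Anosov splitting $E^u\oplus E^s\oplus E^0$ on a compact negatively curved surface has angles bounded away from $0$, such a box has Riemannian diameter at most $K'C_0 e^{-C}\e$ for some geometric constant $K'=K'(S)$. Feeding this back into the first display gives
\begin{equation*}
d\bigl(\pi(\underline\om',u),\pi(\underline\om,t)\bigr)\le\frac{\e}{4}+KK'C_0 e^{-C}\,\e,
\end{equation*}
which is at most $\e$ as soon as $C$ is chosen with $KK'C_0 e^{-C}\le 3/4$. The main obstacle is the careful justification of the middle paragraph's Bowen-type estimate: that the unstable and stable widths of a cylinder are captured, up to a uniform multiplicative constant $C_0$, by the Birkhoff sums of $F^u$ and $F^s$. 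This is standard in the Ratner--Bowen theory of Markov sections for Anosov flows and rests on the H\"older continuity of $F^u, F^s$ together with bounded distortion of the return map, but it must be set up cleanly in order to extract the absolute constant $C$ asserted in the lemma.
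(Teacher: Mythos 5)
Your proof is correct and follows essentially the same route as the paper: reduce to the time-zero section using Lipschitz continuity of the flow in the time variable, then invoke the Bowen--Ratner local product structure together with the H\"older continuity of $F^u,F^s$ to bound the stable and unstable widths of the image of $[\underline\om]_{-n_2}^{n_1}$ by (a uniform constant times) $\exp\bigl(-\sum F^u\bigr)$ and $\exp\bigl(-\sum F^s\bigr)$, and finally choose $C$ large. The paper likewise leaves the bounded-distortion (Bowen-type) estimate implicit as a standard consequence of H\"older regularity, so the level of rigor matches as well.
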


\begin{proof} Recall that $\varphi$ is an Anosov flow on $T^1S$ and the map 
$\pi$, given by Proposition~\ref{coding}, is constructed using a Markov 
partition for the flow (see for example \cite{B,Ra2}). In particular, there is 
$\de>0$ such that at each point $v\in T^1S$ (for notational simplicity we omit
the base point $x\in S$ from $(x,v)\in T^1S$) the sets 
\begin{align*} 
W^s_\de(v):=\{w\in T^1S\mid &d(\varphi_t(w),\varphi_t(v))<\de\;\;
   \forall t\geq 0\text{ and }\\
 &\lim_{t\to\infty}d(\varphi_t(w),\varphi_t(v))=0\}\text{ and}\\
W^u_\de(v):=\{w\in T^1S\mid &d(\varphi_t(w),\varphi_t(v))<\de\;\;
   \forall t\leq 0\text{ and }\\
 &\lim_{t\to-\infty}d(\varphi_t(w),\varphi_t(v))=0\}
\end{align*}
are open 1-dimensional submanifolds, called a local stable and a local unstable
manifold, respectively. Moreover, if 
$w_1,w_2\in\pi([\underline\omega]_{-n_2}^{n_1}\times\{0\})$ there exist a 
unique $t$ with $|t|<\delta$ and a unique 
$w_3:=[w_1,w_2]\in W_\de^s(\varphi_t(w_1))\cap W_\de^u(w_2)$. Furthermore, the
projection along the flow to $\pi([\underline\omega]_{-n_2}^{n_1}\times\{0\})$
is Lipschitz continuous when 
$-r(\sigma^{-1}(\underline\omega))\le t\le r(\underline\omega)$. 

Let $\underline\omega_1,\underline\omega_2\in [\underline\omega]_{-n_2}^{n_1}$
and define $w_1=\pi(\underline\omega_1,0)$ and $w_2=\pi(\underline\omega_2,0)$.
Choose $-\delta<t<\delta$ such that 
$w_3=[w_1,w_2]\in W_\delta^s(\varphi_t(w_1))$. Since $F^u$ and $F^s$ are
H\"older continuous, the distance between $w_2$ and $w_3$ along the unstable
leaf is, up to a bounded constant, 
$\exp(-\sum_{k=0}^{n_1}F^u(\si^k(\underline\om)))$, and the distance between 
$w_3$ and $\pi(\underline\omega_1,t)$ along the stable leaf is, up to a bounded 
constant, $\exp(-\sum_{k=0}^{n_2}F^s(\si^{-k}(\underline\om)))$. Using the 
Lipschitz continuity of the projection along the flow, one can choose a 
constant $C$ in the assumptions such that $d(w_1,w_2)<\varepsilon$. 
Since the flow $\varphi_t$ is smooth there exists $c'>0$ such that 
$d(\varphi_u(w_1),\varphi_u(w_2))\le c'd(w_1,w_2)$ for all 
$0\le u\le r(\underline\omega)$. Finally, as 
$d(\pi(\underline\omega_2,t),\pi(\underline\omega_2,u))\le |t-u|$, the claim 
follows by changing the constant $C$ obtained above. 
\end{proof}

Let $(x,v)=\pi(\underline\omega,t)$. Using Lemma~\ref{balls} and \eqref{muP},
we find $c>0$ such that
\begin{equation}\label{lowerbound}
m_P(B((x,v),\e))\geq\frac\e2\mu_P([\underline\om]_{-n_2}^{n_1})\geq c\e 
  e^{\sum_{k=-n_2}^{n_1-1}G(\si^k(\underline\om))},
\end{equation}
where $n_1$ and $n_2$ are as in Lemma~\ref{balls}. Define
\begin{align*}
&X_n^u(\underline\omega)=-\sum_{i=0}^{n-1}G(\sigma^i(\underline\omega)),\quad
  X_n^s(\underline\omega)=-\sum_{i=1}^nG(\sigma^{-i}(\underline\omega)),\\
&Y_n^u(\underline\omega)=\sum_{i=0}^{n-1}F^u(\sigma^i(\underline\omega))\,
  \text{ and }\,
  Y_n^s(\underline\omega)=\sum_{i=0}^{n-1}F^s(\sigma^{-i}(\underline\omega)).
\end{align*}
By the Shannon-McMillan-Breiman theorem (see for example 
\cite[Remark p.~93]{W}) and by the choice of $P$, we have for $\mu_P$-almost 
all $\underline\omega\in\Sigma$ that
\begin{equation}\label{dimis1}
\lim_{n\to\infty}\frac{X_n^u}{Y_n^u}=\frac12=\lim_{n\to\infty}\frac{X_n^s}{Y_n^s}.
\end{equation}
Let $a=-\int_\Sigma G\,d\mu_P$ and 
$b=\int_\Sigma F^u\,d\mu_P=\int_\Sigma F^s\,d\mu_P$. Then for $v\in\{u,s\}$
\begin{equation}\label{ergodic}
\lim_{n\to\infty}\frac{X_n^v(\underline\omega)}n=a\text{ and }
 \lim_{n\to\infty}\frac{Y_n^v(\underline\omega)}n=b
\end{equation}
for $\mu_P$-almost all $\underline\omega\in\Sigma$. Equation \eqref{dimis1} 
gives $b=2a$.

Observe that $(X_n^u-na,Y_n^u-nb)$ and $(X_n^s-na,Y_n^s-nb)$ are not 
independent. However, using the fact that $F^u$ and $G$ are H\"older 
continuous, we can find $K>0$ such that for all $n\in\mathbb N$ we have
\begin{equation}\label{Holder}
|X_n^u(\underline\omega)-X_n^u(\underline\omega')|<K\text{ and }
|Y_n^u(\underline\omega)-Y_n^u(\underline\omega')|<K
\end{equation}
for all 
$\underline\omega,\underline\omega'\in\Sigma$ such that $\omega_i=\omega_i'$
for all $i=0,1,\dots$ Moreover, the same holds for $X_n^s$ and $Y_n^s$ with the
condition $\omega_i=\omega_i'$ for all $i=-1,-2,\dots$ Let $\Sigma^<$ and 
$\Sigma^\ge$ be the one-sided subshifts corresponding to the negative and 
nonnegative indices, respectively. Fix $\underline\xi_i^-\in\Sigma^<$ and
$\underline\xi_i^+\in\Sigma^\ge$ for $i=1,\dots,n$ such that 
$(\underline\xi_i^-)_{-1}=i$ and $(\underline\xi_i^+)_0=i$. Define 
$\eta,\psi:\{1,\dots,n\}\to\{1,\dots,n\}$ such that $A_{\eta(i)\,i}=1$ and
$A_{i\,\psi(i)}=1$. Setting
$X_n^u(\underline\omega^\ge):=X_n^u(\underline\xi_{\eta(\omega_0^\ge)}^-\vee
  \underline\omega^\ge)$,
$Y_n^u(\underline\omega^\ge):=Y_n^u(\underline\xi_{\eta(\omega_0^\ge)}^-\vee
  \underline\omega^\ge)$,
$X_n^s(\underline\omega^<):=X_n^s(\underline\omega^<\vee
  \underline\xi_{\psi(\omega_{-1}^<)}^+)$ and
$Y_n^s(\underline\omega^<):=Y_n^s(\underline\omega^<\vee
  \underline\xi_{\psi(\omega_{-1}^<)}^+)$,
one can consider $X_n^u$ and $Y_n^u$ as functions on $\Sigma^\ge$ and 
$X_n^s$ and $Y_n^s$ as functions on $\Sigma^<$. By \cite[Lemma 5.9]{Ru} the 
measure $\mu_P^<\times\mu_P^\ge$ restricted to the set
$\{(\underline\omega^<,\underline\omega^\ge)\in\Sigma^<\times\Sigma^\ge\mid
   \underline\omega^<\vee\underline\omega^\ge\in\Sigma\}$
is equivalent with $\mu_P$ and the 
Radon-Nikodym derivative is bounded from above and from below, where 
$\mu_P^<$ and $\mu_P^\ge$ are the Markov measures given by \eqref{muP} on
$\Sigma^<$ and $\Sigma^\ge$, respectively. In particular,
there exists $L>0$ such that
\begin{equation}\label{equivalent}
\mu_P([\underline\omega]_{-n_2}^{n_1})\ge L^{-1}(\mu_P^<\times\mu_P^\ge)
   ([\underline\omega]_{-n_2}^{-1}\times [\underline\omega]_0^{n_1})
\end{equation}
for all $n_1,n_2\in\mathbb N$. Further, inequalities \eqref{Holder} imply
that \eqref{smallradius} is valid for any $\underline\omega\in\Sigma$ for which
$Y_{n_1}^u(\underline\omega^\ge)\ge -\ln\varepsilon+C+K$ and
$Y_{n_2}^s(\underline\omega^<)\ge -\ln\varepsilon+C+K$. Thus it is enough to
consider $(X_n^u-na,Y_n^u-nb)$ and $(X_n^s-na,Y_n^s-nb)$ as independent 
observables.

The almost sure invariance principle \cite[Theorem 3.6]{MN2} (see also
\cite[Theorem 1.3]{MN2}) implies that the observables $(X_n^u-na,Y_n^u-nb)$ and 
$(X_n^s-na,Y_n^s-nb)$ can be approximated by 2-dimensional Brownian motions.
This means that for $v\in\{u,s\}$ there exist $\lambda>0$ and a probability 
space $(X,\mathbb P)$ supporting a sequence of random variables 
$(\widetilde X_n^v,\widetilde Y_n^v)$
having the same distribution as $(X_n^v-na,Y_n^v-nb)$ and a 2-dimensional 
Brownian motion $W^v$ with a covariance matrix $Q^v$ such that  
\[
|(\widetilde X_n^v,\widetilde Y_n^v)-W^v(n)|\ll n^{\frac 12-\lambda}
\]
$\mathbb P$-almost surely for large $n$. In fact, $Q^v$ does not depend on 
$v$ but this information is not necessary.

To prove that the covariance matrix $Q$ is nonsingular, we need a couple of 
lemmas. For a smooth flow on a compact manifold $M$ generated by a vector 
field $X$ and preserving a probability measure $\mu$, one can define an element
$H$ of the first homology space $H_1(M,\mathbb R)$ in the following way:
The mapping which associates to a closed $1$-form $\psi$ on $M$  the integral 
$\int_M\psi(X)\,d\mu$ is linear and, by invariance of $\mu$, vanishes on exact 
$1$-forms. This defines a linear real valued mapping on the first de Rham 
cohomology space $H_{dR}^1(M,\mathbb R)$. We denote it and the corresponding  
element of $H_1(M,\mathbb R)$ (given by de Rham's theorem) by $H$.
We need the following result of Arnol'd \cite[Lemma 23.2]{An}:

\begin{lemma}\label{Arnold}
With the above notation, for the geodesic flow on a compact manifold not 
homeomorphic to the two-torus and for the Liouville measure $\mu$, we have
$H=0$.
\end{lemma}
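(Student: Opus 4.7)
The plan is to combine a direct vanishing computation on pulled-back 1-forms with a topological identification of $H^1_{dR}(T^1 M,\mathbb{R})$. Since $H$ is already a well-defined functional on cohomology (vanishing on exact forms by $\varphi$-invariance of $\mu$, as noted in the setup), it suffices to evaluate it on representatives spanning that cohomology.

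First I would show that $H$ annihilates every class of the form $\pi^\ast\alpha$, where $\pi : T^1 M \to M$ is the canonical projection and $\alpha$ is a closed 1-form on $M$. By the defining property of the geodesic spray, $d\pi\cdot X(x,v) = v$, so $(\pi^\ast\alpha)_{(x,v)}(X(x,v)) = \alpha_x(v)$. Disintegrating the Liouville measure as $d\mu = d\operatorname{vol}_M(x)\,d\sigma_x(v)$, with $\sigma_x$ the round probability measure on the unit tangent sphere $T^1_x M$, the integral $\int_{T^1M}(\pi^\ast\alpha)(X)\,d\mu$ becomes an iterated integral whose inner piece $\int_{T^1_x M}\alpha_x(v)\,d\sigma_x(v)$ vanishes by the $v\mapsto -v$ symmetry of $\sigma_x$ applied to the linear functional $\alpha_x$.

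Next I would invoke the Gysin sequence for the sphere bundle $S^{n-1}\to T^1 M\xrightarrow{\pi} M$, with Euler class $e\in H^n(M,\mathbb{R})$:
$$H^1(M)\xrightarrow{\pi^\ast} H^1(T^1 M)\xrightarrow{\pi_\ast} H^{2-n}(M)\xrightarrow{\cup e} H^2(M),$$
to conclude that $\pi^\ast$ is surjective under the hypothesis on $M$. For $n=\dim M\ge 3$ this is immediate since $H^{2-n}(M)=0$. For $n=2$ the rightmost map is multiplication by $\chi(M)$ on $H^0(M)=\mathbb{R}$, injective precisely when $M$ is not homeomorphic to the two-torus; exactness then forces $\pi_\ast=0$ and hence surjectivity of $\pi^\ast$. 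Combined with the first step, this yields $H=0$.

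The substantive content is the topological step, and the exclusion of the two-torus is essential: its vanishing Euler characteristic produces an additional cohomology class on $T^1\mathbb{T}^2=\mathbb{T}^3$ coming from the $S^1$-fiber that is not a pullback from the base, and the symmetry argument of the first step no longer covers it.
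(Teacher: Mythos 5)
The paper does not prove Lemma~\ref{Arnold}: it is quoted as Arnol'd's Lemma~23.2 from Anosov's memoir \cite{An}, so there is no in-text proof to compare against. Your argument is a correct, self-contained derivation and it is a genuinely useful supplement. The fibrewise step is sound: $d\pi\cdot X(x,v)=v$ gives $(\pi^\ast\alpha)(X)(x,v)=\alpha_x(v)$, and the disintegration $d\mu=d\vol_M(x)\,d\sigma_x(v)$ with $\sigma_x$ the rotation-invariant probability on $T^1_xM$ kills the inner integral of a linear form by the $v\mapsto -v$ symmetry. The Gysin step is also sound: for $\dim M\ge 3$ the target $H^{2-\dim M}(M)$ vanishes outright, while in dimension $2$ exactness makes $\pi^\ast$ surjective on $H^1$ exactly when $\cup\,e\colon H^0(M)\to H^2(M)$ is injective, i.e.\ when $\chi(M)\ne 0$.

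Two small caveats, neither bearing on the paper's use. The Gysin sequence in the form you wrote it (untwisted real coefficients, a well-defined Euler class) presupposes that the sphere bundle is oriented, equivalently that $M$ is orientable; for a nonorientable $M$ one should use the twisted sequence or pass to the orientation double cover and average. Relatedly, in dimension $2$ the Euler-characteristic obstruction also fails for the Klein bottle, so the hypothesis ``not homeomorphic to the two-torus'' implicitly carries an orientability assumption. In the paper's setting $S$ is a compact Riemann surface of negative curvature, hence orientable with $\chi(S)<0$, so both caveats are immaterial there.
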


We will also use a variant of Liv\v{s}ic's Theorem \cite{Liv}:

\begin{lemma}\label{Livcic}
Let $F$ be a H\"older continuous positive function on $T^1S$ and 
$b$ a positive number such that $\int_\gamma F\in b\mathbb N$ for all closed 
geodesics $\gamma$. Then there is a H\"older continuous function $f$ on $T^1S$ 
with values in the complex unit circle such that for all $v\in T^1S$ and for
all $t\ge 0$
\begin{equation}\label{cocycle} 
f(\varphi_t(v))=e^{\frac{2i\pi }{b}\int_0^t F(\varphi_s(v))\,ds}f(v).
\end{equation}
\end{lemma}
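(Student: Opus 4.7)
The plan is to prove a circle-valued version of Liv\v{s}ic's theorem along the classical lines for Anosov flows. Since $\varphi$ is a topologically transitive Anosov flow on $T^1S$, I fix a base point $v_0\in T^1S$ whose forward orbit is dense and define $f$ on this orbit by
$$f(\varphi_t(v_0)):=\exp\!\left(\tfrac{2\pi i}{b}\int_0^t F(\varphi_s(v_0))\,ds\right),\qquad t\ge 0,$$
with $f(v_0):=1$. The cocycle identity \eqref{cocycle} is then built in along the orbit of $v_0$; what remains is to show that $f$ extends H\"older continuously to all of $T^1S$.

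For this I need uniform H\"older control on the dense orbit. Take $v_1=\varphi_{t_1}(v_0)$ and $v_2=\varphi_{t_2}(v_0)$ with $t_1<t_2$ and $d(v_1,v_2)<\e$ sufficiently small. The Anosov closing lemma produces a closed geodesic $\gamma$ of period $T$ with $|T-(t_2-t_1)|=O(\e)$ whose orbit $(\varphi_u(y))_u$ shadows the segment $(\varphi_u(v_1))_{0\le u\le t_2-t_1}$ at distance at most $C\e\,(e^{-\lambda u}+e^{-\lambda(t_2-t_1-u)})$. Combining this exponential-in-the-middle shadowing estimate with the H\"older regularity of $F$ (of exponent $\alpha$, say) and with the period mismatch $|T-(t_2-t_1)|=O(\e)$, I obtain
$$\Bigl|\int_\gamma F-\int_{t_1}^{t_2}F(\varphi_s(v_0))\,ds\Bigr|=O(\e^\alpha),$$
uniformly in $t_2-t_1$. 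The hypothesis $\int_\gamma F\in b\mathbb{N}$ then forces $\tfrac{2\pi}{b}\int_{t_1}^{t_2}F(\varphi_s(v_0))\,ds$ to lie within $O(\e^\alpha)$ of $2\pi\pz$, and therefore $|f(v_2)-f(v_1)|=O(\e^\alpha)$.

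Since $\{\varphi_t(v_0)\}_{t\ge 0}$ is dense in $T^1S$ and $f$ is uniformly $\alpha$-H\"older there with values in the complete metric space $S^1$, it extends uniquely to a H\"older continuous $f\colon T^1S\to S^1$. The cocycle relation \eqref{cocycle}, built into the definition on the dense orbit, extends to every $v\in T^1S$ and every $t\ge 0$ by continuity of $f$, of the flow, and of $s\mapsto\int_0^s F\circ\varphi_u\,du$. The main technical point is the uniformity in $t_2-t_1$ of the H\"older estimate above: one must combine the exponentially small shadowing in the middle of the orbit segment with the H\"older modulus of $F$ so that the total error stays $O(\e^\alpha)$ rather than growing with the time span, and this is where the hyperbolicity of the geodesic flow is used in an essential way.
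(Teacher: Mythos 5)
Your proof follows exactly the paper's argument: define $f$ on a dense forward orbit via the integral of $F$, use the Anosov closing lemma together with the exponential shadowing refinement to compare the Birkhoff integral over an orbit segment with the integral over the nearby closed geodesic, invoke the hypothesis $\int_\gamma F\in b\mathbb{N}$ to get the phase within $O(\e^\alpha)$ of $2\pi\mathbb{Z}$, and extend by uniform H\"older continuity and density. The only cosmetic difference is that the paper spells out the two time-reparametrizations $\tau,\tau'$ on the two halves of the orbit segment while you package this into the shadowing bound $C\e(e^{-\lambda u}+e^{-\lambda(t_2-t_1-u)})$; the content is the same.
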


\begin{proof} The proof is analogous to that given in \cite{Liv}. Let 
$v_0\in T^1S$ be a vector such that 
$\varphi_t(v_0)$, $t\geq 0$, is dense in $T^1S$. We first define $f$ on the 
orbit $\varphi_t(v_0)$ by 
\[
f(\varphi_t(v_0))= e^{\frac{2i\pi}{b}\int_0^t F(\varphi_s(v_0))\,ds}\text{ for }
t\ge 0.
\]
Clearly $f$ takes values in the unit circle and satisfies equation 
\eqref{cocycle} for all $v$ in the orbit of $v_0$ and for all $t\geq 0$. We 
claim that $f$ is uniformly H\"older continuous on the orbit of $v_0$. Since 
the orbit is dense, this implies that $f$ extends to a H\"older continuous 
function on $T^1S$. Obviously this extension has the desired properties. Now 
we prove the 
uniform H\"older continuity of $f$. If $\varphi_t(v_0)$ and $\varphi_{t'}(v_0)$
are close enough (assume that $t<t'$), there exist a uniform constant $c$ and 
a closed geodesic $\gamma$ of length $\ell$ with 
$|\ell-(t'-t)|< cd(\varphi_t(v_0), \varphi_{t'}(v_0))$ such that  
$d(\gamma(s),\varphi_{t+s}(v_0))\le cd(\varphi_t(v_0),\varphi_{t'}(v_0))$ for 
$0\le s\le\ell$ by Anosov Closing Lemma (see 
\cite[Theorem 6.4.15 and p. 548]{KH}). 
Moreover, since the two geodesics $\gamma(s)$, $0\le s\le\ell$, and 
$\varphi_s(v_0)$, $t\le s\le t'$, remain close, they have to get closer 
exponentially, i.e. there exist $\tau$ and $\tau'$ with 
$|\tau |,|\tau'|<c d(\varphi_t(v_0),\varphi_{t'}(v_0))$ and $\tilde c,a>0$ such 
that
\[
d(\varphi_{t+s}(v_0),\gamma(\tau+s))\le\tilde ce^{-as}
   d(\varphi_t(v_0),\varphi_{t'}(v_0))
\]
for $0\le s\le (t'-t)/2$ and
\[
d(\varphi_{t+s}(v_0),\gamma(\tau'+s))\leq\tilde ce^{-a(t'-t -s)}
  d(\varphi_t(v_0),\varphi_{t'}(v_0))
\]
for $(t'-t)/2\le s\le t'-t$ (see \cite[Corollary 6.4.17]{KH}). This implies 
that
\[
\left|\int_t^{t'} F(\varphi_s(v_0))\,ds-\int_0^\ell F(\gamma(s))\,ds\right|
 \le\bar c d(\varphi_t(v_0),\varphi_{t'}(v_0))^\alpha,
\]
where $\alpha $ is the H\"older exponent of $F$ and $\bar c>0$. Since 
$\int_0^\ell F(\gamma(s))\,ds$
is a multiple of $b$, we have for some $\hat c>0$ that
\[
|f(\varphi_t(v_0))-f(\varphi_{t'}(v_0))|=|e^{\frac{2i\pi}b\int_t^{t'} 
  F(\varphi_s(v_0))\,ds}-1|\le\hat c d(\varphi_t(v_0),\varphi_{t'}(v_0))^\alpha.
\]
\end{proof}

\begin{lemma}\label{nonsingular}
There exists $P$ such that $\dimH m_P=2$ and the covariance matrix $Q$ is 
nonsingular.
\end{lemma}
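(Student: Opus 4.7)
The plan is to characterize singularity of $Q$ as a cohomological condition on $\Si$, show that this condition cuts out a low-dimensional subset of the space of Markov matrices, and then use a dimension count on the codimension-one submanifold $M=\{P\pv\dimH m_P=2\}$ produced by Proposition~\ref{dim} to locate $P$ outside this bad set.

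By the standard theory of Gibbs measures and central limit theorems over mixing subshifts of finite type (Sinai--Ruelle--Bowen--Parry--Pollicott), $Q$ is singular if and only if there exist $(\alpha,\beta)\in\pr^2\setminus\{(0,0)\}$ and $c\in\pr$ such that $\alpha G+\beta F^u-c$ is a H\"older coboundary over $\si$. By Liv\v{s}ic's theorem on $\Si$, this is equivalent to the periodic orbit identity
\[
\alpha\sum_{k=0}^{n-1}G(\si^k\underline\om)+\beta\sum_{k=0}^{n-1}F^u(\si^k\underline\om)=nc
\]
on every $\si$-periodic $\underline\om$ of period $n$.

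I would then analyze two cases. If $\beta=0$, then $G$ is cohomologous to a constant; since $\mu_P$ is Gibbs for $G$, it is forced to be the Parry measure of $\si$---a single point in the Markov simplex. If $\beta\neq 0$, after normalization $G$ is cohomologous to $\la F^u+c'$ for some $\la,c'\in\pr$, and since cohomologous H\"older potentials share the same Gibbs state, $\mu_P$ is the equilibrium state of $\la F^u$, which traces at most a real-analytic one-parameter family in $\la$. The role of Lemmas~\ref{Arnold} and~\ref{Livcic} is to further constrain this family: translating the symbolic coboundary to the flow via the Anosov closing lemma yields a quantization $\int_\gamma F\in c\pz$ along every closed geodesic $\gamma$, for $F:=\alpha\tilde G+\beta\chi^u$ (with $\tilde G$ a H\"older lift of $G$ to $T^1S$ and $\chi^u$ the infinitesimal unstable expansion); Lemma~\ref{Livcic} then produces a H\"older cocycle $f\pv T^1S\to S^1$, and Arnol'd's Lemma, applied to the closed $1$-form $d\log f/(2\pi i)$, forces its pairing with the Liouville measure to vanish, yielding an additional linear relation that cuts the bad set to strictly lower dimension.

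Finally I would invoke the relabeling trick from the proof of Proposition~\ref{dim}: by passing to $\ell$-step Markov measures for $\ell$ sufficiently large, the Markov simplex has arbitrarily large dimension, so the codimension-one submanifold $M$ has dimension $\geq 2$ and cannot be contained in the at-most one-dimensional bad set. Any $P\in M$ outside this bad set then satisfies $\dimH m_P=2$ with $Q(P)$ nonsingular. The main obstacle is the translation step: the constant $c$ from Liv\v{s}ic on $\Si$ is a priori only real, whereas Lemma~\ref{Livcic} requires quantization in $b\mathbb{N}$ for a fixed $b$, so one must rescale $(\alpha,\beta)$ by $c$ or argue via rational approximation to align the two conditions, and one must verify that Arnol'd's Lemma genuinely cuts the bad set down rather than being vacuously satisfied.
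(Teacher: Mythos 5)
Your dimension-counting idea for the ``generic'' part of the bad set is a genuinely different and arguably cleaner route than the paper's: the paper handles the condition that $-G-a+\alpha(F^v-b)$ is not a coboundary by an explicit perturbation argument with three periodic orbits carrying distinguishing steps, whereas you observe that if $\alpha\neq 0$ the coboundary relation forces $\mu_P$ to be the unique equilibrium state of $\lambda F^u$ for some $\lambda$, which pins $P$ to an at-most one-dimensional set; combined with the arbitrarily high-dimensional $\ell$-step simplex from Proposition~\ref{dim}, this would suffice. That part of your plan is sound and could stand on its own.

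The gap is in how you treat the case $\alpha=0$, $\beta\neq 0$, and correspondingly in the role you assign to Lemmas~\ref{Arnold} and~\ref{Livcic}. If $\alpha=0$ the coboundary relation is $\beta F^u-c=f\circ\sigma-f$, i.e.\ $F^u$ is cohomologous to a constant. This condition does \emph{not} involve $G$ at all and is therefore independent of $P$: if it held, the ``bad set'' would be the \emph{entire} Markov simplex in every relabeling, and no dimension count could save you. So you must first rule this case out, and this is exactly (and only) what the paper uses Lemmas~\ref{Arnold} and~\ref{Livcic} for: $F^u$ is a positive, flow-intrinsic quantity (the unstable expansion), so if $\int_\gamma F^u\in b\mathbb N$ over all closed geodesics, Lemma~\ref{Livcic} builds a circle-valued cocycle $f$ on $T^1S$, and the associated closed $1$-form pairs nontrivially with the Liouville homology class precisely because $\int_{T^1S}F^u\,d\mu_{\mathrm{Liouville}}>0$, contradicting Arnol'd's vanishing $H=0$. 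Your proposal instead tries to run Arnol'd--Liv\v{s}ic on $F:=\alpha\tilde G+\beta\chi^u$ as a ``further constraint'' on an already-small bad set; that does not go through: $\tilde G$ is a coding- and $P$-dependent object rather than a flow-intrinsic one, $F$ need not be positive, and---crucially---the final step of the Arnol'd argument requires $\int_{T^1S}F\,d\mu_{\mathrm{Liouville}}\neq 0$, which for a general $\alpha,\beta$ is simply not available. Your own closing caveat (``one must verify that Arnol'd's Lemma genuinely cuts the bad set down rather than being vacuously satisfied'') is pointing at precisely this hole; the fix is to isolate the $\alpha=0$ case, apply Arnol'd--Liv\v{s}ic to $F^u$ alone, and only then run your dimension count for $\alpha\neq 0$.

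One further small point: the case you single out as $\beta=0$ (i.e.\ $G$ cohomologous to a constant, hence $\mu_P$ the Parry measure) is indeed a single point, but it is not the case in which Arnol'd's lemma is needed; the case split relevant to the Arnol'd argument is on $\alpha$, not $\beta$.
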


\begin{proof}
According to \cite[Remark 1.2]{MN2} (see also \cite[Section 4.3]{HM} and 
\cite[Corollary 2.3]{MN1}), the covariance matrix $Q$ is nonsingular, if
\[
-G-a+\alpha(F^v-b)\ne f\circ\sigma-f\text{ and } F^v-b\ne f\circ\sigma-f
\]
for all $\alpha\in\mathbb R$ and for all Lipschitz functions $f$. 
To check the first claim, it is enough to find a periodic orbit 
$\gamma=(\underline\omega_1,\dots,\underline\omega_n)$ such that
\begin{equation}\label{zerosum}
\sum_{i=1}^n(-G-a+\alpha(F^v-b))(\underline\omega_i)\ne 0.
\end{equation}

Choose periodic orbits
$\gamma_j=(\underline\omega_1^j,\dots,\underline\omega_{n_j}^j)$ for $j=1,2,3$
such that each $\gamma_j$ contains a step $k_jl_j$ (that is, 
$(\underline\omega_1^j)_{s,s+1}=k_jl_j$ for some $s$) which is not included in 
the other orbits. Assume that one can vary $P_{k_jl_j}$ without changing
$P_{mn}$ for any other step $mn$ included in $\gamma_j$ for $j=1,2,3$ such that
\eqref{dimis1} is valid. This is possible 
since by Proposition~\ref{dim} the level set contains a submanifold having 
codimension equal to 1 and we see in the proof of Proposition~\ref{dim} that we
may choose a coding system $\Sigma$ such that the 
dimension of the space of Markov matrices is as large as we wish. 
Write $\beta=a+\alpha b$ and consider the system of equations
\begin{equation}\label{firsttwo}
\left\{
\begin{aligned}
\sum_{i=1}^{n_1}(-G+\alpha F^v)(\underline\omega_i^1)&=n_1\beta\\
\sum_{i=1}^{n_2}(-G+\alpha F^v)(\underline\omega_i^2)&=n_2\beta.
\end{aligned}\right.
\end{equation}
View for a moment $\alpha$ and $\beta$ as independent variables. Assume that 
this linear system has a unique solution $(\alpha,\beta)$. If $(\alpha,\beta)$
is not a solution of 
\[
\sum_{i=1}^{n_3}(-G+\alpha F^v)(\underline\omega_i^3)=n_3\beta,
\]
we are done. If $(\alpha,\beta)$ is a solution for this third equation, we
may change $G$ in the third equation by varying $P_{k_3l_3}$ without changing 
$G$ in \eqref{firsttwo}.
If this change does not change $a$ and $b$, that is $\beta$, then the
solution $\alpha$ for the third equation will change 
and we are done. If $\beta$ changes, then $(\alpha,\beta)$ is not a solution
for \eqref{firsttwo} any more. The remaining case is that \eqref{firsttwo} has
infinitely many solutions. Then the equations describe the same line, and 
therefore changing $G$, that is, the affine part in the second equation, 
implies that we have two different parallel lines and there is no solution for 
\eqref{firsttwo}.

Finally, we have to show that there exists a periodic orbit
$\gamma=(\underline\omega_1,\dots,\underline\omega_n)$ such that   
\[
\sum_{i=1}^n(F^v-b)(\underline\omega_i)\ne 0.
\]
This follows from Lemma~\ref{Arnold}. Indeed, if this is not the case, let $f$
be the function given by Lemma~\ref{Livcic}. Then $\ln f$ is a purely 
imaginary multivalued function which is smooth in the direction of the flow and
$X(\ln f)=\frac{2i\pi }b F$. Given $\e$, one can find a  smooth function $g$ 
with values in the complex unit circle  such that $|X(\ln g) - X(\ln f)|<\e$ 
(see \cite[Proof of Lemma 23.1]{An}). Then $\frac 1i d\ln g$ is a real  
$1$-form on $T^1S$ such that
\[
H(\tfrac 1i d\ln g)=\int_M\tfrac 1i X(\ln g)\,d\mu\ne 0
\]
as soon as $\e<\frac{2\pi}b\int_M F\,d\mu$.
\end{proof}

Fix $D>1$ and let $\widetilde C=C+K$, where $C$ is as in Lemma~\ref{balls} and
$K$ is as in \eqref{Holder}. Define events $E_n^v$ for
$v\in\{u,s\}$ by ``$X_n^v\le na-D\sqrt n$ and $Y_n^v\ge nb+\widetilde C$'' and 
let $E$ be the event ``$E_n^u$ and $E_n^s$ happens for infinitely many $n$''. 
Then $E$ is a tail event, and therefore, it has probability 0 or 1. By 
independence,
\[
(\mu_P^<\times\mu_P^\ge)(E)\ge\limsup_{n\to\infty}\bigl(\mu_P^\ge(E_n^u)
  \bigr)^2.
\]
The scale invariance \cite[Lemma 1.7]{MP} and Lemma~\ref{nonsingular} imply 
that
\begin{align*}
\mathbb P&\bigl(W(n)\in\{(x,y)\mid x<-2D\sqrt n\text{ and }
  y>n^{\frac 12-\lambda}+\widetilde C\}\bigr)\\
  &\ge\mathbb P\bigl(W(1)\in\{(x,y)\mid x<-2D\text{ and }
  y>1\}\bigr)=\rho>0
\end{align*}
for $n>2\widetilde C^2$. Choose $n$ large enough such that 
\[
\mathbb P\bigl(|(\widetilde X_{n'}^u,\widetilde Y_{n'}^u)-W^u(n')|
  <(n')^{\frac 12-\lambda}\text{ for all }n'\ge n\bigr)>1-\frac\rho 2.
\]
The almost sure invariance principle implies that 
$\mu_P^\ge(E_n^u)>\rho/2$ and thus the event $E$ has probability $1$. 

Write $\varepsilon(n)=e^{-nb}$. Recalling \eqref{lowerbound}, one can 
find for $\mu_P$-almost all $\underline\omega\in\Sigma$ a sequence $n$
tending to infinity such that 
\[
m_P(B((x,v),\e(n)))\ge\tilde c\e(n)e^{-2an+2D\sqrt n}
   =\tilde c\e(n)^2e^{c'\sqrt{-\ln\varepsilon(n)}}
\]
for some constants $\tilde c$ and $c'$.
This implies \eqref{bigmass}.

\vskip 0.2truecm

\textbf{Acknowledgement} We thank Mark Pollicott for pointing us out the 
argument needed in the proof of the final part of Lemma~\ref{nonsingular}.  
We also want to mention the article \cite{PUZ} from where we got our original
inspiration for the whole proof.

\end{document}